\newcommand{\bP}{\mathbb{P}}
\newcommand{\bN}{\mathbb{N}}
\newcommand{\bC}{\mathbb{C}}
\newcommand{\bR}{\mathbb{R}}
\newcommand{\rd}{\mathrm{d}}
\newcommand{\supp}{\operatorname{supp}}
\newcommand{\Res}{\operatorname{Res}}
\theoremstyle{plain}
\newtheorem{lemma}[equation]{Lemma}
\newtheorem{claim}{Claim}
\newtheorem{mainth}{Theorem}
\theoremstyle{definition}
\newtheorem*{ac}{Acknowledgment}
\theoremstyle{remark}
\newtheorem{remark}[equation]{Remark}
\newtheorem{notation}[equation]{Notation}
\numberwithin{equation}{section}
\renewcommand{\bold}[1]{\smallskip \noindent {\bf \boldmath #1 }\nopagebreak[4]}
\begin{document}
\title[a characterization of polynomials]{Potential theory and a
characterization of polynomials in complex dynamics
}
\author[Y. Okuyama]{Y\^usuke Okuyama}
\address{
Department of Comprehensive Sciences,
Graduate School of Science and Technology,
Kyoto Institute of Technology,
Kyoto 606-8585 JAPAN}
\email{okuyama@kit.ac.jp}
\author[M. Stawiska]{Ma{\l}gorzata Stawiska}
\address{
Department of Mathematics, University of Kansas, 1460 Jayhawk Blvd., Lawrence, KS 66045, USA}
\email{stawiska@ku.edu}

\thanks{The first author is partially supported by JSPS Grant-in-Aid for
Young Scientists (B), 21740096.}

\date{\today}

\subjclass[2010]{Primary 37F10; Secondary 31A05}
\keywords{balanced measure, harmonic measure, complex dynamics,
Lopes's theorem, Brolin's theorem, weighted potential theory}

\begin{abstract}
 We obtain a measure theoretical characterization of polynomials
 among rational functions on $\bP^1$,
 which generalizes a theorem of Lopes.
 Our proof applies both classical and
 dynamically weighted potential theory.
\end{abstract}

\maketitle

\section{Introduction}\label{sec:intro}

We are interested in
a measure theoretical characterization of polynomials
among rational functions $f$ of degree $d>1$ on $\bP^1$.
Recall that the Fatou set $F(f)$ of $f$ is the region of normality of
iterates $\{f^k;k\in\bN\}$ in $\bP^1$, which by the definition is open.
The Julia set $J(f)$ is the complement of $F(f)$ and it is
known to be non-empty. Both $F(f)$ and $J(f)$ are $f$-invariant.
The characterization that we have in mind is provided
by the following theorem:

\begin{mainth}\label{th:polynomial}
 Let $f$ be a rational function on $\bP^1$ of degree $d>1$.
 Suppose that the point $\infty$ belongs to a Fatou component
 $D_{\infty}$ of $f$,
 and that $f(D_{\infty})=D_{\infty}$.
 Then the following are equivalent$:$
\begin{enumerate}
 \item $f$ is a polynomial.
       \label{item:poly}
 \item The balanced measure $\mu_f$ of $f$  coincides with
       the harmonic measure $\nu$ of $D_\infty$ with pole $\infty$.
       \label{item:equal}
\end{enumerate}
\end{mainth}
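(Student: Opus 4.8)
I would prove the two implications separately. The implication (\ref{item:poly})$\Rightarrow$(\ref{item:equal}) is essentially classical: if $f$ is a polynomial then $D_\infty$ is the (completely invariant) basin of the superattracting fixed point $\infty$, $\partial D_\infty=J(f)$, and $f^{-1}(\infty)=\{\infty\}$ with local degree $d$. The key transformation law — that for a domain $D$ with $f(D)=D$, $f(\partial D)\subset\partial D$ and $f^{-1}(\infty)\subset D$, the harmonic measure $\nu=\nu_\infty$ satisfies $f^*\nu=\sum_{w\in f^{-1}(\infty)}m_w\,\nu_w$, where $m_w$ is the local degree of $f$ at $w$ and $\nu_w$ the harmonic measure with pole $w$ — follows by comparing $g_D(f(\cdot),\infty)$ (which is nonnegative, harmonic off $f^{-1}(\infty)$ with the correct logarithmic poles, and vanishes on $\partial D$) with $\sum_w m_w\,g_D(\cdot,w)$ and applying $dd^c$. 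For a polynomial this reads $f^*\nu=d\,\nu$, and as $\nu$ is a probability measure not charging the exceptional set, uniqueness of the balanced measure forces $\nu=\mu_f$. (This is just Brolin's theorem phrased through harmonic measure.)

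For (\ref{item:equal})$\Rightarrow$(\ref{item:poly}) the first step is to fix the geometry. From $\mu_f=\nu$ together with $\supp\mu_f=J(f)$ and $\supp\nu\subset\partial D_\infty\subset J(f)$ one obtains $J(f)=\partial D_\infty$, hence $f(\partial D_\infty)\subset\partial D_\infty$. The transformation law above then yields
\[
 f^*\nu=\sum_{w\in f^{-1}(\infty)\cap D_\infty}m_w\,\nu_w+\sum_{w\in f^{-1}(\infty)\setminus D_\infty}m_w\,\delta_w .
\]
On the other hand $f^*\nu=f^*\mu_f=d\,\mu_f=d\,\nu$; since $\nu$ and each $\nu_w$ are nonatomic (as $J(f)$ is uniformly perfect), comparing atoms forces $f^{-1}(\infty)\subset D_\infty$. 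Hence $f|_{D_\infty}\colon D_\infty\to D_\infty$ is proper of degree $d$, so $D_\infty$ and $K:=\bP^1\setminus D_\infty$ are completely invariant, and $\sum_{w\in f^{-1}(\infty)}m_w\,\nu_w=d\,\nu$.

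Next I would pass to potentials. Set $P(z)=\prod_{w\in f^{-1}(\infty)\setminus\{\infty\}}(z-w)^{m_w}$, so that $\deg P=d-m_\infty$ and $f$ is a polynomial precisely when $\deg P=0$; write $g=g_{D_\infty}(\cdot,\infty)$. Computing $dd^c\bigl(\tfrac1d\,g\circ f-g\bigr)$ with $f^*\nu=d\nu$ one finds it equals $dd^c\bigl(-\tfrac1d\log|P|\bigr)$, whence $\tfrac1d\,g\circ f-g=C-\tfrac1d\log|P|$ for a constant $C$; evaluating on $K$, where $g$ and $g\circ f$ vanish (complete invariance), gives $\log|P|\equiv dC$ on $K$. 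Thus
\[
 g(f(z))=d\,g(z)-\log|P(z)|+dC\quad\text{on }\bP^1 ,
 \qquad |P|\equiv e^{dC}\ \text{on }K=J(f).
\]

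It remains to show $\deg P=0$, and this is where I expect the real work — it is the main obstacle. The displayed functional equation exhibits $g$ as a solution of a \emph{dynamically weighted} invariance equation, the weight being the denominator $P$ of $f$; I would argue, via the dynamically weighted potential theory, that such a solution must coincide with the weighted Green's function attached to the pair $(f,P)$, so that its being simultaneously the \emph{unweighted} Green's function of $D_\infty$ forces the weight to be trivial, i.e.\ $P$ constant. Equivalently, one can proceed geometrically: if $\deg P\ge1$ then $|P|$ is constant on $K=J(f)$, so $J(f)$ lies on the polynomial lemniscate $\{|P|=e^{dC}\}$; since $D_\infty=\bP^1\setminus K$ is connected, $J(f)=\partial D_\infty$, and $D_\infty$ contains all the (finitely many) zeros of $P$, this is untenable — if $K$ has nonempty interior then $P$ is constant on an open set, hence constant; otherwise $J(f)$ is a perfect subset of a real‑analytic curve, which (by the classification of rational maps whose Julia set lies on such a curve) makes $f$ conjugate to a power map or a Chebyshev polynomial, hence a polynomial, contradicting $\deg P\ge1$. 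In all cases $\deg P=0$, so $f^{-1}(\infty)=\{\infty\}$ and $f$ is a polynomial.
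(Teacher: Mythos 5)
Your reduction to the statement that $|P|$ is constant on $K$, so that $J(f)$ lies on a lemniscate of the denominator of $f$, is essentially the paper's Lemma \ref{th:keygeneral} and Claim \ref{th:lemniscate}. But the step you yourself flag as ``the real work'' --- ruling out $\deg P\ge 1$ when $K$ has empty interior --- is a genuine gap, and it is precisely the step the paper's new argument is designed to handle. Your first suggestion (that a solution of the weighted invariance equation ``must coincide with the weighted Green's function, forcing the weight to be trivial'') is not an argument. Your second suggestion rests on a classification that is not a theorem: a rational map whose Julia set lies on a real-analytic curve need not be conjugate to a power map or a Chebyshev polynomial (any Blaschke product of degree $d$ with Julia set the unit circle, or $z^2-6$ with its Cantor Julia set on $\bR$, is a counterexample), so nothing at this point forces $f$ to be a polynomial. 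The paper closes the gap dynamically: if $l$ is a component of the lemniscate $L$ meeting $J(f)$, then $f^k(l)\subset L$ for all $k$ (Claim \ref{th:proper}); if some $z_0\in l$ lay in $D_\infty$, then Lemma \ref{th:keygeneral} together with the definition of $L$ gives $p_{\mu_f}(f^k(z_0))-I_{\mu_f}=d^k\,(p_{\mu_f}(z_0)-I_{\mu_f})\to\infty$, contradicting the upper bound $\sup_L p_{\mu_f}<\infty$ on the compact forward-invariant set $L$; hence $l\subset J(f)$, and then a component $U$ of $\bP^1\setminus l$ not containing $\infty$ has $\partial U\subset J(f)$, lies in $\bC\setminus L\subset F(f)$ by the maximum modulus principle, and so is a Fatou component equal to $D_\infty$ --- contradicting $\infty\notin U$. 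Some argument of this kind is indispensable; it cannot be replaced by a citation.

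There is also a flaw earlier in your chain: the transformation law $f^*\nu=\sum_{w\in f^{-1}(\infty)\cap D_\infty}m_w\,\nu_w+\sum_{w\notin D_\infty}m_w\,\delta_w$ is false, and with it the ``comparison of atoms'' that you use to get $f^{-1}(\infty)\subset D_\infty$. A pole $w\notin D_\infty$ lies in some component $V\ne D_\infty$ of the open set $f^{-1}(D_\infty)$, and its contribution to $f^*\nu$ is $m_w$ times the harmonic measure of $V$ with pole $w$ (a measure on $\partial V$), not $m_w\,\delta_w$; indeed $f^*\nu(\{w\})=m_w\,\nu(\{\infty\})=0$ always, so there are never atoms to compare. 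The repair is to postpone complete invariance: your identity $\tfrac1d\,g\circ f-g=C-\tfrac1d\log|P|$ already forces $|P|$ to be constant on $\bP^1\setminus f^{-1}(D_\infty)$, since $g$ and $g\circ f$ both vanish there, and the identity theorem then shows that either $P$ is constant or $F(f)=f^{-1}(D_\infty)=D_\infty$ (the paper's Claim \ref{th:reduction}), from which $f^{-1}(\infty)\subset D_\infty$ follows a posteriori. Finally, your proof of (\ref{item:poly})$\Rightarrow$(\ref{item:equal}) is correct but leans on the uniqueness of the $f^*$-invariant measure not charging the exceptional set, i.e.\ on the equidistribution machinery the paper deliberately avoids; the paper instead computes $I_{\mu_f}=I_\nu$ explicitly and invokes only the uniqueness of the equilibrium measure of $\bC\setminus D_\infty$.
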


The probability measure $\mu_f$ in Theorem \ref{th:polynomial} is
also known to be the (unique) maximal entropy measure of $f$, which was constructed 
by Lyubich \cite{Lyubich83} and by 
Freire, Lopes and Ma\~n\'e \cite{FLM83} (see the next paragraph
for more historical remarks).
Under the additional assumption that $f(\infty)=\infty$,
Theorem \ref{th:polynomial} was proved by Lopes \cite{Lopes86}. 
The Lopes's theorem was stated earlier in 
Oba and Pitcher \cite[Theorem 6]{ObaPitcher}, but proved only partially.
Lalley gave a probabilistic proof of Lopes's theorem 
(\cite[\S 6]{Lalley92}).
In all those proofs, a key role is played by the same equality,
which is a consequence of a pullback formula
for the logarithmic potential of balanced measure.
We will give a simple and conceptually new proof of
both this formula and the key equality in an improved form
(Lemma \ref{th:keygeneral} and Claim \ref{th:lemniscate}),
which will enable us to prove Theorem \ref{th:polynomial}.
Ma\~n\'e and da Rocha \cite{ManeDaRocha92} also
studied Lopes's theorem in relation to calculations
of the entropy of invariant measures on the Julia set. 

Brolin's theorem \cite[Theorem 16.1]{Brolin} says that
 the pullbacks $(f^k)^*\delta_a/d^k$
 converge weakly to the harmonic measure $\nu=\nu_{\infty}$ of
 $D_{\infty}$ with pole at $\infty$ 
 when $f$ is a polynomial and $\delta_a$ is the Dirac measure
 at a non-exceptional point $a\in\bC$ of $f$.
 As a generalization of Brolin's theorem, the balanced measure $\mu_f$ 
 was first obtained as the weak limit of pullbacks $(f^k)^*\delta_a/d^k$
 for a rational function $f$ and a non-exceptional point $a\in\bP^1$
 (by Lyubich \cite{Lyubich83} and by Freire, Lopes and Ma\~n\'e
 \cite{FLM83} independently).
 However, in this article, we will not use these equidistribution
 results.

In the next section we recall the definition of measures $\mu_f$ and $\nu$.
Our proof of Theorem \ref{th:polynomial} is
an application of both classical and dynamically weighted potential theory.
For related results on (generalized) polynomial-like maps,
see \cite{PopoviciVolberg96}, \cite{Zdunik97}.

\begin{notation}
 We denote the origin of $\bC^2$ by $0$.
 Let $\pi:\bC^2\setminus\{0\}\to\bP^1$ be the canonical projection
 so that $\pi(z_0,z_1)=z_1/z_0$ if $z_0\neq 0$ and
 $\pi(z_0,z_1)=\infty$ if $z_0=0$.
 Let $\|\cdot\|$ be the Euclidean norm on $\bC^2$, and
 put $(z_0,z_1)\wedge(w_0,w_1):=z_0w_1-z_1w_0$ on $\bC^2\times\bC^2$.
 A function on $\bP^1$ is said to be $\delta$-subharmonic (DSH)
 if it is locally the difference of two subharmonic functions.
 We normalize $\rd^c$ so that
 $\rd\rd^c=(i/\pi)\partial\overline{\partial}$.  An important example of a value of the $dd^c$-operator is
 the generalized Laplacian of the $\delta$-subharmonic function
 $\log|\cdot-w|$ $(w\in\bC)$ on $\bP^1$,  which equals  $\rd\rd^c\log|\cdot-w|=\delta_w-\delta_{\infty}$,
 where $\delta_w$ denotes the Dirac measure at $w\in\bP^1$.
\end{notation}

\section{Rational functions and probability measures on $\bP^1$}
\label{sec:facts}

\bold{Balanced measure $\mu_f$.}
For more details, see \cite[\S 4]{HP94}, \cite[\S 1]{Ueda94}
and \cite[Chapitre VIII]{BM01}.

Let $f$ be a rational function on $\bP^1$ of degree $d>1$.
A lift 
\begin{gather*}
 F(z_0,z_1)=(F_0(z_0,z_1),F_1(z_0,z_1))
\end{gather*}
of $f$ is a non-degenerate homogeneous polynomial endomorphism
of algebraic degree $d$ on $\bC^2$ in that
$\pi\circ F=f\circ\pi$ and $F^{-1}(0)=\{0\}$,
and is uniquely determined up to multiplication by
a constant in $\bC^*$.

The dynamical Green function of $F$ is
\begin{gather*}
 G^F:=\lim_{k\to\infty}\frac{1}{d^k}\log\|F^k\|:\bC^2\to\bR\cup\{-\infty\}.
\end{gather*}
This convergence is uniform on $\bC^2\setminus\{0\}$, so
$G^F$ is continuous there and plurisubharmonic on $\bC^2$.
It follows from the definition of $G^F$ that
\begin{gather}
 d\cdot G^F=G^F\circ F,\label{eq:invariance}
\end{gather}
and from homogeneity of $F$ that for every $p\in\bC^2$ and every $c\in\bC^*$,
\begin{gather}
\label{eq:scaled} G^F(c\cdot p)=G^F(p)+\log|c|,\\
 G^{cF}(p)=G^F(p)+\frac{1}{d-1}\log|c|.\label{eq:choice}
\end{gather}
The function $G^F(1,\cdot)$ is continuous on $\bC$ and
$\delta$-subharmonic on $\bP^1$, and the balanced measure $\mu_f$ is 
 defined by the unique probability measure on $\bP^1$ satisfying
\begin{gather} \label{eq:dd^c}
 \rd\rd^c G^F(1,\cdot)=\mu_f-\delta_{\infty}.
\end{gather}
It follows from (\ref{eq:choice}) that  the left-hand side of (\ref{eq:dd^c}) is independent of the choice of $F$, hence 
the measure $\mu_f$ is  well-defined.
It is also known that $\supp\mu_f=J(f)$
and $J(f)$ is perfect.

From (\ref{eq:invariance}),
the $\mu_f$ is balanced and invariant under $f$: namely,
\begin{gather}
 \frac{f^*\mu_f}{d}=\mu_f=f_*\mu_f.\label{eq:balanced}
\end{gather}
We recall that the pullback $f^*\phi$ of continuous function $\phi$ on $\bP^1$
is defined by $\phi\circ f$, and the push-forward $f_*\phi$ is 
\begin{gather*}
 f_*\phi(z):=\frac{1}{d^k}\sum_{w\in f^{-1}(z)}\phi(w),\label{eq:pullback}
\end{gather*}
where the sum takes into account of the multiplicity of $f$ at each $w$.
Both $f^*\phi$ and $f_*\phi$ are continuous on $\bP^1$,
and we may define respectively the push-forward $f_*\mu$
and the pullback $f^*\mu$ of (Radon) measure $\mu$ by duality.

\bold{Harmonic measure $\nu=\nu_{\infty}$.}
For a finite Borel measure $\nu$ on $\bC$
with compact support, its logarithmic potential on $\bC$ is
\begin{gather}
 p_{\nu}(z):=\int_{\bC}\log|z-w|\rd\nu(w)=\nu(\bC)\log|z|+O(|z|^{-1})\label{eq:logarithmic}
\end{gather}
as $z\to\infty$,
and the logarithmic energy of $\nu$ is $I_{\nu}:=\int_{\bC}p_{\nu}\rd\nu$.
A compact set $K$ in $\bC$ is said to be polar if
\begin{gather*}
\sup\left\{I_{\nu};\supp\nu\subset K,\nu(\bC)=1\right\}=-\infty.\label{eq:supremum}
\end{gather*}
If $K$ is non-polar, then by Frostman's theorem,
there is the unique probability measure
$\nu=\nu_K$ (the equilibrium measure of $K$)
which attains the supremum in the above.
The measure $\nu$ has the support
on the exterior boundary $\partial_e K$ of $K$, and
satisfies that $p_{\nu}\equiv I_{\nu}$ on $K\setminus E$, where $E$ is
a (possibly empty) $F_{\sigma}$ polar subset of $\partial_e K$. 
Moreover, $p_{\nu}>I_{\nu}$ on $D_{\infty}$ by the minimum principle.

For a domain $D$ in $\bP^1$ which contains $\infty$ and
 whose complement $\bC\setminus D$ is non-polar, 
the harmonic measure
$\nu=\nu_{\infty}$ of $D$ with pole $\infty$ is determined by
$\nu_{\bC\setminus D}$ (cf. \cite[Theorem 4.3.14]{Rans95}).
 Under the situation in Theorem \ref{th:polynomial},
we will compute $I_{\mu_f}$ and see directly that $I_{\mu_f}>-\infty$, 
so $\bC\setminus D_{\infty}$ is non-polar
(see Lemma \ref{th:logenergy} below).
Hence $D_{\infty}$ admits the harmonic measure 
$\nu=\nu_{\infty}(=\nu_{\bC\setminus D_{\infty}})$
with pole  at $\infty$. 

\bold{Dynamically weighted potential theory.}
A function
\begin{gather*}
 \bC^2\times\bC^2\ni(p,q)\mapsto\log|p\wedge q|-G^F(p)-G^F(q)\in\bR\cup\{-\infty\}
\end{gather*}
descends to a weighted kernel $\Phi_F(z,w)$
($p\in\pi^{-1}(z),q\in\pi^{-1}(w)$) on $\bP^1$.
For a Radon measure $\mu$, its $F$-potential is
a $\delta$-subharmonic function
\begin{gather*}
 U_{F,\mu}(z):=\int_{\bP^1}\Phi_F(z,w)\rd\mu(w).
\end{gather*}
It can be computed directly that
$\rd\rd^cU_{F,\mu}=\mu-\mu(\bP^1)\mu_f$,
so the potential $U_{F,\mu_f}$ of $\mu=\mu_f$ is harmonic
on $\bP^1$, and hence constant, say,
$U_{F,\mu_f}\equiv V_F$. The constant $V_F$ has been computed as
\begin{gather}
 V_F=-\frac{1}{d(d-1)}\log|\Res F|\label{eq:resultant}
\end{gather}
in \cite[Theorem 1.5]{DeMarco03}. We will compute it in a different way in
Appendix.

\section{A proof of Theorem \ref{th:polynomial}}\label{sec:proof}

Let $f$ be a rational function on $\bP^1$ of degree $d>1$, and
$F=(F_0,F_1)$ a lift of $f$. We note that
\begin{gather*}
 f(z)=F_1(1,z)/F_0(1,z).
\end{gather*}
Put $d_0:=\deg F_0(1,z)$ and $d_1:=\deg F_1(1,z)$,
and let $a_F,b_F$ be the coefficients
of the maximal degree term of $F_0(1,z),F_1(1,z)$, respectively.

Suppose that a Fatou component $D_{\infty}$ of $f$
contains $\infty$, and that $f(D_{\infty})=D_{\infty}$. 

\begin{lemma}\label{th:decomp}
 For every $z\in\bC$,
 \begin{gather*}
  p_{\mu_f}(z)=G^F(1,z)-G^F(0,1).\label{eq:Riesz}
 \end{gather*}
 In particular, $p_{\mu_f}$ is continuous on $\bC$.
\end{lemma}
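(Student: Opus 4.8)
The plan is to compare the two $\delta$-subharmonic functions $p_{\mu_f}$ and $G^F(1,\cdot)$ on $\bP^1$ by computing their Laplacians and then pinning down the additive constant. First I would recall that on $\bP^1$ one has $\rd\rd^c\log|\cdot-w|=\delta_w-\delta_\infty$ for $w\in\bC$; integrating this against $\mu_f$ (and using Fubini, legitimate because $p_{\mu_f}$ will be shown finite, or else working with the defining distributional identity directly) gives $\rd\rd^c p_{\mu_f}=\mu_f-\delta_\infty$ on $\bP^1$. On the other hand, the defining property \eqref{eq:dd^c} of the balanced measure says exactly $\rd\rd^c G^F(1,\cdot)=\mu_f-\delta_\infty$. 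Hence the difference $p_{\mu_f}-G^F(1,\cdot)$ is a $\delta$-subharmonic function on $\bP^1$ with vanishing generalized Laplacian, so it is harmonic on all of $\bP^1$, and therefore constant.

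The remaining task is to evaluate this constant. The natural place is at $\infty$: from \eqref{eq:logarithmic} we have $p_{\mu_f}(z)=\log|z|+O(|z|^{-1})$ as $z\to\infty$ (here $\mu_f(\bC)=1$ since $\infty\in D_\infty\subset F(f)$ while $\supp\mu_f=J(f)$, so $\mu_f$ puts no mass at $\infty$). For the other term, I would use the scaling relation \eqref{eq:scaled}: writing $(1,z)=z\cdot(1/z,1)$ for $z\neq0$ gives $G^F(1,z)=G^F(1/z,1)+\log|z|$, and since $G^F$ is continuous on $\bC^2\setminus\{0\}$, $G^F(1/z,1)\to G^F(0,1)$ as $z\to\infty$. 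Thus $G^F(1,z)=\log|z|+G^F(0,1)+o(1)$. Comparing the two asymptotic expansions forces the constant $p_{\mu_f}(z)-G^F(1,z)$ to equal $-G^F(0,1)$, which is the claimed identity. Continuity of $p_{\mu_f}$ on $\bC$ then follows immediately, since $G^F(1,\cdot)$ is continuous on $\bC$ (it is the restriction to the slice $z_0=1$ of the function $G^F$, which is continuous on $\bC^2\setminus\{0\}$, and the slice $\{1\}\times\bC$ avoids $0$).

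The only genuinely delicate point is the justification that $\rd\rd^c p_{\mu_f}=\mu_f-\delta_\infty$ as measures on $\bP^1$: a priori $p_{\mu_f}$ could be $-\infty$ on a polar set, and one must make sure the logarithmic potential is not identically $-\infty$, i.e.\ that $\mu_f$ is not supported on a polar set. But this is precisely what is arranged by Lemma~\ref{th:logenergy} (the finiteness of $I_{\mu_f}$), so one may simply quote it; alternatively, since the difference of the two candidate functions is harmonic away from $\supp\mu_f\cup\{\infty\}$ and locally bounded above, one can invoke the removable-singularity characterization of harmonicity to upgrade it to harmonic everywhere. Everything else is a short asymptotic computation.
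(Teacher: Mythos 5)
Your proof is correct, but it takes a genuinely different route from the paper's. The paper does not compute $\rd\rd^c p_{\mu_f}$ at all; it invokes the constancy $U_{F,\mu_f}\equiv V_F$ of the weighted potential (established in \S\ref{sec:facts}) and unpacks the kernel identity $\log|z-w|=\Phi_F(z,w)+G^F(1,z)+G^F(1,w)$ for $p=(1,z)$, $q=(1,w)$, which immediately gives $p_{\mu_f}=G^F(1,\cdot)+C_F$ with $C_F=V_F+\int_{\bC}G^F(1,\cdot)\rd\mu_f$; the constant is then identified at $\infty$ exactly as you do. You instead compare the generalized Laplacians of $p_{\mu_f}$ and $G^F(1,\cdot)$ directly and use that a harmonic function on $\bP^1$ is constant --- at bottom the same principle the paper uses to show $U_{F,\mu_f}$ is constant, but applied one step earlier, so the weighted kernel is bypassed entirely. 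What the paper's route buys is the explicit expression of $C_F$ in terms of $V_F$, which is reused verbatim in the proof of Lemma \ref{th:logenergy} to compute $I_{\mu_f}$ via (\ref{eq:resultant}); your route is more classical and self-contained but would require re-deriving that relation later.

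Two small corrections. First, the ``genuinely delicate point'' you flag is not delicate: for a compactly supported probability measure on $\bC$ one has $p_{\mu_f}(z)\ge\log(|z|-R)>-\infty$ for $|z|>R$ when $\supp\mu_f\subset\{|w|\le R\}$, so $p_{\mu_f}$ is a subharmonic function not identically $-\infty$, hence locally integrable, and the distributional identity $\rd\rd^c p_{\mu_f}=\mu_f-\delta_\infty$ holds with no hypothesis about polar sets or finite energy. Second, you must not ``simply quote'' Lemma \ref{th:logenergy} here: in the paper that lemma is proved \emph{using} Lemma \ref{th:decomp}, so the reference would be circular. Your alternative (or, better, the observation that no fix is needed) is the correct way out.
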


\begin{proof}
 Recall that $U_{F,\mu_f}\equiv V_F$ on $\bP^1$.
 Hence for every $z\in\bC$,
 \begin{align*}
 p_{\mu_f}(z)&=\int_{\bC}\log|z-w|\rd\mu_f(w)\\
 &=U_{F,\mu_f}(z)+G^F(1,z)+\int_{\bC}G^F(1,w)\rd\mu_f(w)\\
 &=G^F(1,z)+C_F,
 \end{align*}
 where we put $C_F:=V_F+\int_{\bC}G^F(1,w)\rd\mu_f(w)$. Hence
 from (\ref{eq:scaled}) and (\ref{eq:logarithmic}),
 \begin{gather*}
  0=\lim_{z\to\infty}(p_{\mu_f}(z)-\log|z|)=\lim_{z\to\infty}G^F(1/z,1)+C_F,
 \end{gather*}
 so that $C_F=-G^F(0,1)$.
\end{proof}

The following computation of $I_{\mu_f}$ 
may be of independent interest, and
was proved in \cite[Theorem 4]{ObaPitcher}
under the restrictions $f(\infty)=\infty$ and $d_0<d-1$, with no reference to $G^F$.

\begin{lemma}\label{th:logenergy}
 The complement $\bC\setminus D_{\infty}$ of $D_{\infty}$is non-polar,
and $D_{\infty}$ admits the harmonic measure $\nu=\nu_{\infty}$
with pole $\infty$. The energy $I_{\mu_f}$ of $\mu_f$ is computed as
\begin{gather*}
e^{I_{\mu_f}}=e^{-2G^F(0,1)}|\Res F|^{\frac{1}{d(d-1)}}.\label{eq:energy}
\end{gather*}
Here $\Res F:=a_F^{d-d_1}b_F^{d-d_0}R(F_0(1,z),F_1(1,z))$ is the
homogeneous resultant of $F$,
where $R(P(z),Q(z))$ is the resultant of two polynomials $P(z)$ and $Q(z)$.
\end{lemma}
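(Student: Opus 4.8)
The plan is to compute $I_{\mu_f}=\int_{\bC}p_{\mu_f}\,\rd\mu_f$ by substituting the expression for $p_{\mu_f}$ from Lemma \ref{th:decomp}. This immediately gives
\begin{gather*}
 I_{\mu_f}=\int_{\bC}\bigl(G^F(1,z)-G^F(0,1)\bigr)\rd\mu_f(z)
 =\int_{\bC}G^F(1,z)\rd\mu_f(z)-G^F(0,1),
\end{gather*}
using that $\mu_f$ is a probability measure supported on $\bC$ (since $\infty\in D_\infty\subset F(f)$ and $\supp\mu_f=J(f)$). So everything reduces to evaluating $\int_{\bC}G^F(1,z)\rd\mu_f(z)$. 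Recalling the definition $C_F=V_F+\int_{\bC}G^F(1,w)\rd\mu_f(w)$ from the proof of Lemma \ref{th:decomp}, together with the identity $C_F=-G^F(0,1)$ established there, I get $\int_{\bC}G^F(1,w)\rd\mu_f(w)=-G^F(0,1)-V_F$. Hence
\begin{gather*}
 I_{\mu_f}=-2G^F(0,1)-V_F.
\end{gather*}
Now I invoke the formula $V_F=-\tfrac{1}{d(d-1)}\log|\Res F|$ from \eqref{eq:resultant} (attributed to DeMarco, and reproved in the Appendix), which yields $e^{I_{\mu_f}}=e^{-2G^F(0,1)}|\Res F|^{1/(d(d-1))}$, exactly the claimed identity. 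The definition of $\Res F$ in terms of $a_F,b_F$ and the resultant $R(F_0(1,z),F_1(1,z))$ is just the standard homogeneous resultant written in the affine chart, so no separate argument is needed there.

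It remains to prove the first assertion, that $\bC\setminus D_\infty$ is non-polar, so that the harmonic measure $\nu_\infty$ exists. For this I observe that the right-hand side of the energy formula is manifestly a finite real number: $G^F(0,1)$ is a finite real value (the dynamical Green function is continuous on $\bC^2\setminus\{0\}$, and $(0,1)\neq 0$), and $|\Res F|>0$ because $F$ is non-degenerate, i.e. $F^{-1}(0)=\{0\}$, which is equivalent to the non-vanishing of the homogeneous resultant. Therefore $I_{\mu_f}>-\infty$. Since $\mu_f$ is a probability measure with $\supp\mu_f=J(f)\subset\bC\setminus D_\infty$ and finite logarithmic energy, the compact set $\bC\setminus D_\infty$ cannot be polar (a polar compact set carries no probability measure of finite energy, by definition of polarity). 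The existence of the harmonic measure $\nu_\infty$ of $D_\infty$ with pole $\infty$ then follows from the standard theory recalled in Section \ref{sec:facts} (cf. \cite[Theorem 4.3.14]{Rans95}).

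The only genuine input from outside is the value of $V_F$ in \eqref{eq:resultant}; since the paper reproves this in the Appendix, the argument is self-contained, and the main body of the proof is just the two-line substitution together with the bookkeeping identity $C_F=-G^F(0,1)$ already in hand. I expect the only point requiring a little care to be the logical ordering: the non-polarity of $\bC\setminus D_\infty$ must be deduced \emph{from} the finiteness of $I_{\mu_f}$, so I will first derive the energy formula as an identity in $\bR\cup\{-\infty\}$, then note that its right-hand side is finite, and only then conclude non-polarity and the existence of $\nu_\infty$. No real obstacle is anticipated beyond this sequencing.
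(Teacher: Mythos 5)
Your proposal is correct and follows essentially the same route as the paper: integrate the identity of Lemma \ref{th:decomp} against $\rd\mu_f$, use the bookkeeping relation $C_F=-G^F(0,1)$ together with the value of $V_F$ from \eqref{eq:resultant}, and deduce non-polarity of $\bC\setminus D_\infty$ from $I_{\mu_f}>-\infty$. The only (immaterial) difference is that the paper observes $I_{\mu_f}>-\infty$ directly from the finiteness of $\int_{\bC}G^F(1,\cdot)\,\rd\mu_f$, whereas you read it off the finiteness of the final right-hand side; both are valid.
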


\begin{proof}
Integrating the equality in Lemma \ref{th:decomp} in $\rd\mu_f(z)$, we get
\begin{gather*}
I_{\mu_f}=\int_{\bC} p_{\mu_f}\rd\mu_f=
\int_{\bC}G^{F}(1,\cdot)\rd\mu_f-G^F(0,1)(>-\infty),
\end{gather*}
 which with $\supp\mu_f\subset J(f)\subset\bC\setminus D_{\infty}$
implies that $\bC\setminus D_{\infty}$ is non-polar.
It also follows from the proof of the previous lemma that
\begin{gather*}
-G^F(0,1)=C_F=V_F+\int_{\bC}G^F(1,\cdot)\rd\mu_f,
\end{gather*}
where $C_F$ has been introduced in the proof of the previous lemma. 
The value of $V_F$ has already been computed in (\ref{eq:resultant}). Hence
\begin{gather*}
I_{\mu_f}=\frac{1}{d(d-1)}\log|\Res F|-2G^F(0,1).
\end{gather*}
\end{proof}

The following was 
proved in \cite[p307]{ObaPitcher} 
and \cite[p398]{Lopes86} under the assumption $f(\infty)=\infty$ and
$z\in J(f)$ (again with no reference to $G^F$).

\begin{lemma}\label{th:keygeneral}
For every $z\in\bC\setminus f^{-1}(\infty)$,
\begin{gather*}
 p_{\mu_f}(f(z))=d\cdot p_{\mu_f}(z)-\log|F_0(1,z)|+(d-1)G^F(0,1).
 \label{eq:direct}
\end{gather*}
\end{lemma}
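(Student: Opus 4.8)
The plan is to lift the identity to $\bC^2$ and then combine the two functional equations for the dynamical Green function $G^F$ with Lemma \ref{th:decomp}. Fix $z\in\bC\setminus f^{-1}(\infty)$; this is exactly the condition $F_0(1,z)\neq 0$, which ensures both $f(z)\in\bC$ and $\log|F_0(1,z)|>-\infty$. Since $\pi\circ F=f\circ\pi$ and $F^{-1}(0)=\{0\}$, the point $F(1,z)$ is a nonzero element of $\bC^2$ lying over $f(z)$, and in homogeneous coordinates
\begin{gather*}
 (1,f(z))=\left(1,\frac{F_1(1,z)}{F_0(1,z)}\right)=\frac{1}{F_0(1,z)}\bigl(F_0(1,z),F_1(1,z)\bigr)=\frac{1}{F_0(1,z)}\,F(1,z).
\end{gather*}

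Next I would evaluate $G^F$ on this representative. Applying the homogeneity relation (\ref{eq:scaled}) with $c=1/F_0(1,z)$ and $p=F(1,z)$, and then the invariance relation (\ref{eq:invariance}), gives
\begin{gather*}
 G^F(1,f(z))=G^F\bigl(F(1,z)\bigr)-\log|F_0(1,z)|=d\cdot G^F(1,z)-\log|F_0(1,z)|.
\end{gather*}

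Finally I would translate back to logarithmic potentials. By Lemma \ref{th:decomp} we have $p_{\mu_f}(w)=G^F(1,w)-G^F(0,1)$ for every $w\in\bC$; applying this at $w=z$ and at $w=f(z)$ and substituting into the displayed identity yields
\begin{gather*}
 p_{\mu_f}(f(z))+G^F(0,1)=d\bigl(p_{\mu_f}(z)+G^F(0,1)\bigr)-\log|F_0(1,z)|,
\end{gather*}
which rearranges to the asserted formula. I do not expect a genuine obstacle here: the argument is a short chain of substitutions, and the only steps requiring care are the homogeneous-coordinate rewriting of $(1,f(z))$ and the observation that the hypothesis $z\notin f^{-1}(\infty)$ is precisely what makes $F_0(1,z)\neq 0$, so that the scaling in (\ref{eq:scaled}) applies and the term $\log|F_0(1,z)|$ is finite.
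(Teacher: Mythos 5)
Your proof is correct and follows exactly the paper's own route: rewrite $(1,f(z))$ as $F(1,z)/F_0(1,z)$, apply the scaling relation (\ref{eq:scaled}) and the invariance (\ref{eq:invariance}) to get $G^F(1,f(z))=d\cdot G^F(1,z)-\log|F_0(1,z)|$, then substitute via Lemma \ref{th:decomp}. No gaps; your added remark that $z\notin f^{-1}(\infty)$ is precisely the condition $F_0(1,z)\neq 0$ is a helpful clarification the paper leaves implicit.
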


\begin{proof}
 For every $z\in\bC\setminus f^{-1}(\infty)$, from (\ref{eq:invariance}),
\begin{align*}
   G^{F}(1,f(z))
  =G^{F}(F(1,z))-\log|F_0(1,z)|
  =d\cdot G^{F}(1,z)-\log|F_0(1,z)|.
\end{align*}
 Now Lemma \ref{th:decomp} completes the proof.
\end{proof}

We now proceed with the proof of our theorem.
Let us denote the harmonic measure of $D_{\infty}$ with pole at $\infty$
by $\nu=\nu_{\infty}$.

\bold{(\ref{item:equal})$\Rightarrow$(\ref{item:poly}).}
Suppose first that $\mu_f=\nu$.
Under this assumption, $p_{\nu}=p_{\mu_f}$ is continuous
on $\bC$ (Lemma \ref{eq:Riesz}), so we have 
$p_{\mu_f}=p_{\nu}\equiv I_{\nu}=I_{\mu_f}$ on $\bC\setminus D_{\infty}$.

A weaker form of the identity in Claim \ref{th:lemniscate}
was the key equality which we mentioned in \S \ref{sec:intro}
(\cite[p398]{Lopes86}, \cite[Claim 2 in \S 6]{Lalley92}).

\begin{claim}\label{th:lemniscate}
On $\bC\setminus f^{-1}(D_{\infty})$,
\begin{gather*}
|F_0(1,\cdot)|\equiv e^{(d-1)(I_{\mu_f}+G^F(0,1))}.\label{eq:lemniscate}
\end{gather*}
\end{claim}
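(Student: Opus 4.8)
The plan is to combine Lemma~\ref{th:keygeneral} with the fact, just recorded in the paragraph preceding the claim, that $p_{\mu_f}\equiv I_{\mu_f}$ on $\bC\setminus D_\infty$. First I would fix $z\in\bC\setminus f^{-1}(D_\infty)$ with $z\notin f^{-1}(\infty)$, so that $f(z)\in\bC\setminus D_\infty$; then both $z$ and $f(z)$ lie outside $D_\infty$ (note $\bC\setminus f^{-1}(D_\infty)\subset\bC\setminus D_\infty$ because $f(D_\infty)=D_\infty$ forces $D_\infty\subset f^{-1}(D_\infty)$). Plugging $p_{\mu_f}(z)=I_{\mu_f}$ and $p_{\mu_f}(f(z))=I_{\mu_f}$ into the identity of Lemma~\ref{th:keygeneral} gives
\begin{gather*}
 I_{\mu_f}=d\cdot I_{\mu_f}-\log|F_0(1,z)|+(d-1)G^F(0,1),
\end{gather*}
which rearranges at once to $\log|F_0(1,z)|=(d-1)(I_{\mu_f}+G^F(0,1))$, i.e.\ the asserted equality.

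Second I would dispose of the finitely many excluded points $z\in f^{-1}(\infty)\setminus f^{-1}(D_\infty)$, i.e.\ the zeros of $F_0(1,\cdot)$ (if any) that happen to lie outside $f^{-1}(D_\infty)$. Since $I_{\mu_f}>-\infty$ by Lemma~\ref{th:logenergy}, the right-hand side $e^{(d-1)(I_{\mu_f}+G^F(0,1))}$ is a strictly positive finite constant, whereas at such a point $|F_0(1,z)|=0$; so I must rule this out. One clean way is continuity: the set where the two continuous functions $|F_0(1,\cdot)|$ and the constant agree is closed, and we have shown it contains the dense open subset $\bC\setminus f^{-1}(D_\infty\cup\{\infty\})$ of $\bC\setminus f^{-1}(D_\infty)$, hence it contains all of $\bC\setminus f^{-1}(D_\infty)$. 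Alternatively, a point $z$ with $F_0(1,z)=0$ satisfies $f(z)=\infty\in D_\infty$, so $z\in f^{-1}(D_\infty)$ and is excluded from the start; this is even cleaner and I would present it this way, remarking that $\bC\setminus f^{-1}(D_\infty)$ already avoids all zeros of $F_0(1,\cdot)$.

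The only subtlety, and the step I expect to need the most care, is justifying that $p_{\mu_f}\equiv I_{\mu_f}$ holds on \emph{all} of $\bC\setminus D_\infty$ rather than merely $\bC\setminus D_\infty$ minus a polar set. This is exactly why Lemma~\ref{th:decomp} was proved: it shows $p_{\mu_f}=G^F(1,\cdot)-G^F(0,1)$ is continuous on $\bC$, so the Frostman identity $p_{\nu_{\bC\setminus D_\infty}}=I_{\nu}$ valid off a polar set, together with $\mu_f=\nu$ and the minimum principle giving $p_{\mu_f}>I_{\mu_f}$ on $D_\infty$, upgrades by continuity to equality on the whole closed set $\bC\setminus D_\infty$ (a polar set has empty interior, so continuity forces the equality everywhere on the support $J(f)\subset\bC\setminus D_\infty$ and then on its closure). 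Once that is in hand the computation above is immediate; I would therefore structure the proof as: (1) recall $p_{\mu_f}\equiv I_{\mu_f}$ on $\bC\setminus D_\infty$; (2) note $\bC\setminus f^{-1}(D_\infty)\subset(\bC\setminus D_\infty)\cap f^{-1}(\bC\setminus D_\infty)$ and contains no zero of $F_0(1,\cdot)$; (3) apply Lemma~\ref{th:keygeneral} and solve for $\log|F_0(1,z)|$.
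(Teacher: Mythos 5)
Your proposal is correct and is essentially the paper's own proof: the paper likewise observes that $f(D_\infty)=D_\infty$ gives $D_\infty\subset f^{-1}(D_\infty)$, so that $p_{\mu_f}\circ f=p_{\mu_f}\equiv I_{\mu_f}$ on $\bC\setminus f^{-1}(D_\infty)$, and then invokes Lemma~\ref{th:keygeneral}. You merely make explicit two points the paper leaves implicit, namely that $\bC\setminus f^{-1}(D_\infty)$ avoids $f^{-1}(\infty)$ (since $\infty\in D_\infty$) and that the identity $p_{\mu_f}\equiv I_{\mu_f}$ on $\bC\setminus D_\infty$ rests on the continuity of $p_{\mu_f}$ from Lemma~\ref{th:decomp}.
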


\begin{proof}
 The assumption $f(D_{\infty})=D_{\infty}$
 implies $D_{\infty}\subset f^{-1}(D_{\infty})$. Hence
 $p_{\mu_f}\circ f=p_{\mu_f}\equiv I_{\mu_f}$ on 
 $\bC\setminus f^{-1}(D_{\infty})$.
 Now Lemma \ref{th:keygeneral} completes the proof.
\end{proof}

Suppose that $F_0(1,\cdot)$ is non-constant, that is, $d_0>0$.
Our refinement of the key equality make the following reduction possible.

\begin{claim}\label{th:reduction}
 $F(f)=D_{\infty}$.
\end{claim}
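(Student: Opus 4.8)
The plan is to deduce the claim from the refined key equality alone. Claim~\ref{th:lemniscate} says precisely that the closed set $\bC\setminus f^{-1}(D_\infty)$ is contained in the lemniscate $\Lambda:=\{z\in\bC:|F_0(1,z)|=c\}$, where $c:=e^{(d-1)(I_{\mu_f}+G^F(0,1))}>0$. Since $F_0(1,\cdot)$ is non-constant by hypothesis, $\Lambda$ is nowhere dense, so $f^{-1}(D_\infty)$ is an open dense subset of $\bP^1$; the point of the proof is that this density, together with the fact that $f^{-1}(D_\infty)$ is a union of Fatou components, already forces $F(f)=D_\infty$.

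In detail, I would proceed as follows. First, $\Lambda$ has empty interior: on a putative non-empty open subset of $\Lambda$ the holomorphic function $F_0(1,\cdot)$ would be non-vanishing with constant modulus, hence constant there, hence constant on all of $\bC$ by the identity theorem, contradicting $d_0>0$. Thus $\bC\setminus f^{-1}(D_\infty)\subset\Lambda$ has empty interior, and since $\infty\in D_\infty\subset f^{-1}(D_\infty)$, the set $f^{-1}(D_\infty)$ is open and dense in $\bP^1$. Second, $f^{-1}(D_\infty)$ is a union of Fatou components: it is open, and for each Fatou component $C$ the set $f(C)$ is connected and contained in $F(f)$, hence contained in a single Fatou component, so either $f(C)\subset D_\infty$ and $C\subset f^{-1}(D_\infty)$, or $f(C)\cap D_\infty=\emptyset$ and $C\cap f^{-1}(D_\infty)=\emptyset$; and $D_\infty\subset f^{-1}(D_\infty)$ because $f(D_\infty)=D_\infty$. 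Third, no Fatou component can be disjoint from $f^{-1}(D_\infty)$, for such a component would be a non-empty open set contained in the nowhere dense closed set $\bP^1\setminus f^{-1}(D_\infty)$; hence $F(f)\subset f^{-1}(D_\infty)$, and since also $f^{-1}(D_\infty)\subset f^{-1}(F(f))=F(f)$ by complete invariance of the Fatou set, we get $f^{-1}(D_\infty)=F(f)$. Finally, applying $f$ and using that $f$ is surjective with $f(F(f))=F(f)$, we obtain $D_\infty=f\bigl(f^{-1}(D_\infty)\bigr)=f(F(f))=F(f)$.

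I do not expect a serious obstacle. The entire content is Claim~\ref{th:lemniscate} together with the observation that a lemniscate of a non-constant polynomial is nowhere dense; after that, only elementary point-set topology and the standard facts that a non-constant rational map is open and surjective and that $F(f)$ is completely invariant are used. The only points demanding care are the bookkeeping that makes $f^{-1}(D_\infty)$ a union of entire Fatou components and the correct invocation of surjectivity in the last line. It is worth noting that the hypothesis $d_0>0$ is essential here: for a polynomial $F_0(1,\cdot)$ is constant, and $F(f)$ may then be strictly larger than $D_\infty$.
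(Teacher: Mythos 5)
Your argument is correct and is essentially the paper's own proof, resting on the same two pillars: Claim~\ref{th:lemniscate} plus the identity theorem to show that $\bP^1\setminus f^{-1}(D_\infty)$ has empty interior, and then applying $f$ to the resulting identity $F(f)=f^{-1}(D_\infty)$. You merely make explicit a detail the paper leaves implicit (that $f^{-1}(D_\infty)$ is a union of entire Fatou components, which is what makes $F(f)\setminus f^{-1}(D_\infty)$ open), so no further comparison is needed.
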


\begin{proof}
 If $F(f)\neq f^{-1}(D_{\infty})$, then
 $F(f)\setminus f^{-1}(D_{\infty})$
 is a non-empty open subset of $\bC\setminus f^{-1}(D_{\infty})$.
 By the identity theorem, Claim \ref{th:lemniscate} implies that $F_0(1,\cdot)$ must be constant,
 which contradicts the assumption $d_0>0$. Hence
 $F(f)=f^{-1}(D_{\infty})$, which implies that
 $F(f)=f(F(f))=D_{\infty}$.
\end{proof}

By Claim \ref{th:lemniscate},
$J(f)$ is contained in the lemniscate
\begin{gather*}
 L:=\{z\in\bC;|F_0(1,z)|=e^{(d-1)(I_{\mu_f}+G^F(0,1))}\}.
\end{gather*}
We note that each component of $L$ is a (possibly non-simple)
closed curve in $\bC\setminus f^{-1}(\infty)$,  which is
 real-analytic except  for finitely many singularities;
more precisely, 
for each $z_0\in L$, there are an $n\in\bN$, a M\"obius transformation $K$
and a local holomorphic coordinate $h$ around $z_0$  
such that $h(z_0)=0$, 
$K(F_0(1,z_0))=0$, $\mathrm{Im}K(F_0(1,\cdot))\equiv 0$ on $L$ and
$K(F_0(1,h^{-1}(w)))=w^n$ around $0$. In particular,
$L$ around $z_0$ 
is the image of $\bigcup_{-n\le j<n}\{w;\arg w=j\pi/n\}\cup\{0\}$ under
$h^{-1}$, and $n\ge 2$ if and only if $z_0$ is a critical point of
$F_0(1,\cdot)$.

Fix a component $l$ of $L$ intersecting $J(f)$.

\begin{claim}\label{th:proper}
 For every $k\in\bN$, $f^k(l)\subset L$.
\end{claim}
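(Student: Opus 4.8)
The plan is to deduce the statement for all $k$ from the case $k=1$, and then to prove the case $k=1$ by showing that $l$ is contained in $J(f)$.

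\emph{Reduction.} For every $k$ the set $f^{k}(l)$ is connected, being the continuous image of the connected set $l$; and since $l\cap J(f)\neq\emptyset$ and $f(J(f))=J(f)\subset L$, the set $f^{k}(l)$ meets $J(f)$. Hence, once $f^{k}(l)\subset L$ is known, $f^{k}(l)$ lies in a single component $\lambda_{k}$ of $L$, and $\lambda_{k}$ again meets $J(f)$. It therefore suffices to prove: \emph{if $\lambda$ is a component of $L$ with $\lambda\cap J(f)\neq\emptyset$, then $f(\lambda)\subset L$}; applying this successively to $\lambda=l,\lambda_{1},\lambda_{2},\dots$ yields $f^{k}(l)\subset L$ for all $k\in\bN$.

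\emph{The case $k=1$.} Put $u:=p_{\mu_{f}}-I_{\mu_{f}}$, which by Lemma \ref{th:decomp} equals $G^{F}(1,\cdot)-(I_{\mu_{f}}+G^{F}(0,1))$ and is continuous on $\bC$. Under the standing hypothesis $\mu_{f}=\nu$, together with Claim \ref{th:reduction}, one has $u\ge 0$ on $\bC$, $u>0$ on $D_{\infty}\cap\bC$, and $u=0$ precisely on $\bC\setminus D_{\infty}=J(f)$; in particular $u$ is bounded on the compact set $L$. From $G^{F}(1,f(z))=d\,G^{F}(1,z)-\log|F_{0}(1,z)|$ (see the proof of Lemma \ref{th:keygeneral}) and the defining equation of $L$ one obtains the magnification relation
\[
u(f(z))=d\,u(z)\qquad(z\in L).
\]
It then suffices to show $\lambda\subset J(f)$, i.e.\ $u\equiv 0$ on $\lambda$: granting this, $f(\lambda)\subset f(J(f))=J(f)\subset L$. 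The naive argument for $u\equiv 0$ on $\lambda$ — if $M:=\max_{\lambda}u>0$, iterate the magnification relation along the forward orbit of a point where the maximum is attained to produce points of $L$ on which $u$ equals $d^{\,k}M\to\infty$, contradicting boundedness of $u$ on $L$ — is circular, since it presupposes that the orbit remains in $L$.

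\emph{The main obstacle} is to make this non-circular, i.e.\ to establish \emph{a priori} that no component of $L$ meeting $J(f)$ enters $D_{\infty}$. I would argue by a dichotomy. If $\lambda\cap J(f)$ has nonempty interior in the curve $\lambda$, then (shrinking to avoid the finitely many singular points of $\lambda$) $J(f)$ contains a nondegenerate real-analytic arc; by the classical rigidity for Julia sets, $J(f)$ is then a real-analytic Jordan curve, a real-analytic Jordan arc, or all of $\bP^{1}$, and each case is impossible: $\bP^{1}$ because $D_{\infty}\neq\emptyset$; the Jordan curve because $\bP^{1}\setminus J(f)$ would then be disconnected, contrary to Claim \ref{th:reduction}; and the Jordan arc because $f$ would be M\"obius-conjugate to a Chebyshev polynomial, whence $\mu_{f}=\nu$ together with the position of the pole $\infty$ of $\nu$ forces the conjugating M\"obius map to fix $\infty$, so that $f$ is a polynomial, contrary to $d_{0}>0$. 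In the remaining case $\lambda\cap J(f)$ is nowhere dense in $\lambda$; it is nevertheless perfect, because the components of $L$ are finitely many pairwise disjoint curves, so every point of $J(f)$ near a point of $\lambda$ lies on $\lambda$ itself. Excluding this last possibility is the real difficulty; the natural tool is the magnification relation $u\circ f=d\,u$ restricted to the countably many open analytic arcs of $\lambda\setminus J(f)=\lambda\cap D_{\infty}$, on which $u$ is a positive harmonic function vanishing at both endpoints. Once this is carried out one has $\lambda\subset J(f)$ outright, and then $f^{k}(l)\subset f^{k}(J(f))=J(f)\subset L$ for all $k$, so that the reduction step above is in fact superfluous.
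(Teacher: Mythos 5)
Your argument has a genuine gap, and you flag it yourself: the whole strategy is routed through the assertion $\lambda\subset J(f)$, but in the final case of your dichotomy (where $\lambda\cap J(f)$ is a nowhere dense perfect subset of the curve $\lambda$) you only name ``the natural tool'' and say ``once this is carried out\dots'' without carrying it out. That case is exactly the hard part, and nothing in what you wrote excludes it. Note also that $l\subset J(f)$ is precisely the \emph{next} claim in the paper, and its proof there \emph{uses} Claim \ref{th:proper} (via the magnification relation iterated along an orbit that is known to stay in $L$); so your plan inverts the logical order and attempts the harder statement first. The first branch of your dichotomy is also heavier than it needs to be: the step from ``$J(f)$ contains a nondegenerate real-analytic arc that is relatively open in $J(f)$'' to ``$J(f)$ is an analytic Jordan curve, arc, or all of $\bP^1$'' is a nontrivial rigidity theorem that the paper never invokes.

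The paper's proof avoids all of this by not trying to show $l\subset J(f)$ at this stage. It only needs three ingredients: (a) $J(f)\subset L$ and $L$ has finitely many components, each compact, so a small disc around a point $z_0\in l\cap J(f)$ meets $J(f)$ only inside $l$; (b) $J(f)$ is perfect, so $l\cap J(f)$ accumulates at $z_0$; (c) $f^k(z_0)\in J(f)\subset L$, and likewise the accumulating points of $l\cap J(f)$ near $z_0$ are sent by $f^k$ into the single component $L_k$ of $L$ containing $f^k(z_0)$. Since $l$ and $L_k$ are real-analytic curves and $f^k$ is holomorphic, the identity theorem for regular analytic arcs (Minda's result, cited in the paper) upgrades ``$f^k$ maps a sequence in $l$ accumulating at $z_0$ into $L_k$'' to ``$f^k(l)\subset L_k\subset L$.'' In short: you do not need the whole curve to lie in $J(f)$, only infinitely many points of it, and those are supplied for free by perfectness of $J(f)$ together with Claim \ref{th:lemniscate}. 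I recommend reworking your proof along these lines; your reduction to $k=1$ and the magnification relation $u\circ f=d\,u$ on $L$ are correct but are not needed for this claim (they belong to the proof of the subsequent one).
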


\begin{proof}
 Fix $z_0\in l\cap J(f)$. 
 Since $L$ has at most $d_0$ components,
 there exists $\delta>0$ such that
 $\{|z-z_0|<\delta\}\cap J(f)\subset l$. Hence 
 from the perfectness of $J(f)$,
 $z_0$ is a non-isolated point of $l\cap J(f)$.
 
 For every $k\in\bN$, let $L_k$ be the component of $L$ containing $f^k(z_0)$.
 By the same argument as the above, there is  $\delta_k>0$ such that
 $\{|z-f^k(z_0)|<\delta_k\}\cap J(f)\subset L_k$. Hence
 if $\delta>0$ is small enough, then
 $f^k(\{|z-z_0|<\delta\}\cap l\cap J(f))\subset L_k$.
 Then by an argument involving the identity theorem \cite{Minda77},
 $f^k(l)\subset L_k(\subset L)$.
\end{proof}

\begin{claim}
 $l\subset J(f)$.
\end{claim}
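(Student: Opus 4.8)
The plan is to show that the curve $l$ cannot meet $D_\infty$; since $F(f)=D_\infty$ by Claim~\ref{th:reduction}, this is exactly the assertion $l\subset J(f)$. The whole argument is organized around the function
\begin{gather*}
 u:=p_{\mu_f}-I_{\mu_f}\colon\bC\to\bR,
\end{gather*}
which by Lemma~\ref{th:decomp} and Lemma~\ref{th:logenergy} (in particular $I_{\mu_f}>-\infty$) is well defined and continuous on $\bC$, is strictly positive on $D_\infty$ by the minimum principle recalled in \S\ref{sec:facts} (using the standing hypothesis $\mu_f=\nu$), and, crucially, is multiplied by the factor $d$ under $f$ at every point of the lemniscate $L$.

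First I would observe that $L$ is a \emph{compact} subset of $\bC$: on $L$ one has $|F_0(1,\cdot)|\equiv e^{(d-1)(I_{\mu_f}+G^F(0,1))}$ by Claim~\ref{th:lemniscate}, a positive constant, while $|F_0(1,z)|\to\infty$ as $z\to\infty$ because $d_0>0$; in particular $L\cap f^{-1}(\infty)=\emptyset$, so $u$ is bounded on $L$. Secondly, for any $w\in L$ Lemma~\ref{th:keygeneral} applies, and substituting $\log|F_0(1,w)|=(d-1)(I_{\mu_f}+G^F(0,1))$ into it and subtracting $I_{\mu_f}$ from both sides gives the functional identity
\begin{gather*}
 u(f(w))=d\cdot u(w)\qquad(w\in L).
\end{gather*}

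Now suppose, for contradiction, that $l\cap D_\infty\neq\emptyset$, and fix $z\in l\cap D_\infty$, so $u(z)>0$. By Claim~\ref{th:proper} the entire forward orbit of $z$ lies in $L$, namely $f^k(z)\in f^k(l)\subset L$ for all $k\in\bN$; hence iterating the identity above along the orbit yields $u(f^k(z))=d^k u(z)$, which tends to $+\infty$ since $d>1$. This contradicts the boundedness of $u$ on $L$. Therefore $l\cap D_\infty=\emptyset$, and since $F(f)=D_\infty$ by Claim~\ref{th:reduction} we obtain $l\cap F(f)=\emptyset$, i.e.\ $l\subset J(f)$.

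I do not expect a genuine obstacle here: once the identity $u\circ f=d\cdot u$ on $L$ is in hand, the proof is a one-line growth-versus-compactness argument. The conceptual step is recognizing that this multiplicative behaviour is available; the points that require care, rather than difficulty, are that the identity correctly packages the earlier results (the value and continuity of $p_{\mu_f}$ from Lemma~\ref{th:decomp}, the finiteness of $I_{\mu_f}$ from Lemma~\ref{th:logenergy}, the pullback formula from Lemma~\ref{th:keygeneral}, the constancy of $|F_0(1,\cdot)|$ on $L$ from Claim~\ref{th:lemniscate}), that $u$ is \emph{strictly} positive on $D_\infty$, and that Claim~\ref{th:proper} is invoked for the whole curve $l$, not merely for $l\cap J(f)$.
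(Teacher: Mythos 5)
Your proposal is correct and follows essentially the same route as the paper: strict positivity of $p_{\mu_f}-I_{\mu_f}$ on $D_\infty$ via the minimum principle and $\mu_f=\nu$, the multiplicative relation $u\circ f=d\cdot u$ on $L$ obtained from Lemma \ref{th:keygeneral} together with the definition of $L$, Claim \ref{th:proper} to keep the forward orbit in $L$, and boundedness of $p_{\mu_f}$ on the compact set $L$ to reach a contradiction. Packaging the argument through the auxiliary function $u$ is only a cosmetic difference.
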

\begin{proof}
  Suppose that $l\cap F(f)\neq\emptyset$. Then
 there exists $z_0\in l\cap D_{\infty}$ by Claim \ref{th:reduction}, 
 and hence $p_{\mu_f}(z_0)=p_{\nu}(z_0)>I_{\nu}=I_{\mu_f}$.
 By Claim \ref{th:proper}, for every $k\in\bN$,
 we have $f^{k-1}(z_0)\subset L\subset\bC\setminus f^{-1}(\infty)$,
 which with Lemma \ref{th:keygeneral}
 (and the definition of $L$) implies that
 $p_{\mu_f}(f^k(z_0))-I_{\mu_f}=d\cdot(p_{\mu_f}(f^{k-1}(z_0))-I_{\mu_f})$.
 Hence
 \begin{gather*}
  p_{\mu_f}(f^k(z_0))-I_{\mu_f}=d^k\cdot(p_{\mu_f}(z_0)-I_{\mu_f})>0,
 \end{gather*} 
 so $\lim_{k\to\infty}p_{\mu_f}(f^k(z_0))=\infty$.
 On the other hand, by Claim \ref{th:proper},
 we have $(f^k(z_0))\subset L$, and 
 since $p_{\mu_f}$ is upper semicontinuous,
 we have $\sup_{k\in\bN}p_{\mu_f}(f^k(z_0))\le\sup_L p_{\mu_f}<\infty$.
 This is a contradiction.
\end{proof}

Let $U$ be a component of $\bP^1\setminus l$ not containing $\infty$.
Then $\partial U\subset J(f)$. By the maximum modulus principle,
it  follows that $U\subset\bC\setminus L$, and from $J(f)\subset L$,
we have $U\subset F(f)$. Hence $U$ is a Fatou component of $f$, 
and by Claim \ref{th:reduction}, we must have $U=F(f)=
D_{\infty}$. This contradicts our assumption $\infty\not\in U$. 

Now the proof of (\ref{item:equal})$\Rightarrow$(\ref{item:poly}) is complete.

\bold{(\ref{item:poly})$\Rightarrow$(\ref{item:equal}).}
 Here we will give a proof of the assertion $\mu_f=\nu$ 
 without the equidistribution results mentioned in
 Section \ref{sec:intro}, using only computation
 of capacity.
 
Suppose that $f$ is a polynomial, or equivalently,
that $F_0(1,z)\equiv a_F$ on $\bC$. By a direct computation,
Lemma \ref{th:logenergy} implies that
\begin{gather*}
 e^{I_{\mu_f}}=\exp\left(-2\cdot\frac{1}{d-1}\log|b_F|\right)\cdot(|a_Fb_F|^d)^{\frac{1}{d(d-1)}}
=|a_F/b_F|^{\frac{1}{d-1}}.
\end{gather*}
Brolin's theorem 
which we mentioned in Section \ref{sec:intro} was based on the computation of $e^{I_{\nu}}$
(\cite[Lemma 15.1]{Brolin}) as
\begin{gather*}
 e^{I_{\nu}}=|b_F/a_F|^{-\frac{1}{d-1}}.
\end{gather*}
Hence $I_{\mu_f}=I_{\nu}$, and from the uniqueness of
$\nu=\nu_{\bC\setminus D_\infty}$, we have $\mu_f=\nu$.

Now the proof of Theorem \ref{th:polynomial} is complete. \qed

\begin{ac}
 We thank David Drasin for his comments on an earlier version of this paper, which helped us improve the presentation of our results.
\end{ac}

\section{Appendix: a computation of $V_F$}
Let $f$ be a rational function of degree $d>1$, and $F=(F_0,F_1)$ be a lift of $f$. 
For completeness, we give a direct computation (\ref{eq:resultant}) of $V_F$, 
again without using the equidistribution theorem. Indeed,
we can give a proof of the equidistribution theorem based on (\ref{eq:resultant}).
For the original computation of $V_F$, which uses the equidistribution theorem,
see DeMarco \cite[Theorem 1.5]{DeMarco03}.
After having written this appendix,
we learned that similar computations and formulas appeared in Appendix A
in \cite{Baker09}.

We continue to use the notation
$d_0,d_1,a_F,b_F$ as in \S \ref{sec:proof}.
Let us write as $F_0(1,z)=a_F\prod_{j=1}^{d_0}(z-w_j)$. Then
since $F:\bC^2\to\bC^2$ is homogeneous and
of topological degree $d^2$, we get on $\bC^2$,
\begin{align*}
 |F(p)\wedge (0,1)|
=&|F_0(p)|
=|a_F||p\wedge (0,1)|^{d-d_0}\prod_j|p\wedge (1,w_j)|\\
=&(|a_F|(|F_1(0,1)|^{1/d})^{d-d_0}\prod_j|F_1(1,w_j)|^{1/d})
\prod_{q\in F^{-1}(0,1)}|p\wedge q|^{1/d},
\end{align*}
and the leading coefficient is computed as
\begin{gather*}
 |a_F|(|F_1(0,1)|^{1/d})^{d-d_0}\prod_j|F_1(1,w_j)|^{1/d}\\
= (|a_F|^d|b_F|^{d-d_0}\cdot|a_F|^{-d_1}|R(F_0(1,z),F_1(1,z))|)^{1/d}
=|\Res F|^{1/d}.
\end{gather*}
Hence on $\bC^2$,
\begin{gather}
 |F(p)\wedge (0,1)|=|\Res F|^{1/d}\prod_{q\in F^{-1}(0,1)}|p\wedge q|^{1/d}.\label{eq:factorization}
\end{gather}

From (\ref{eq:invariance}), $G^F(F(p))=d\cdot G^F(p)$ and
$G^F(q)=G^F(0,1)/d$ ($q\in F^{-1}(0,1)$).
Hence the $\log$ of (\ref{eq:factorization}) descends to $\bP^1$ as
\begin{gather*}
 \Phi_F(f(z),\infty)=\frac{1}{d}\log|\Res F|+\int_{\bP^1}\Phi_F(z,w)\rd(f^*\delta_{\infty})(w).\label{eq:factor}
\end{gather*}
Integrating this in $\rd\mu_f(z)$,
\begin{gather*}
 \int_{\bP^1}\Phi_F(f(z),\infty)\rd\mu_f(z)=\frac{1}{d}\log|\Res F|+\int_{\bP^1}U_{F,\mu_f}(w)\rd(f^*\delta_{\infty})(w),
\end{gather*}
and from $f_*\mu_f=\mu_f$ and $U_{F,\mu_f}\equiv V_F$,
\begin{align*}
 V_F&=\int_{\bP^1}\Phi_F(\infty,\cdot)\rd f_*\mu_f=
\int_{\bP^1}\Phi_F(\infty, f(z))\rd\mu_f(z)\\
&=\frac{1}{d}\log|\Res F|+\int_{\bP^1}V_F\rd(f^*\delta_{\infty})(w)
=\frac{1}{d}\log|\Res F|+d\cdot V_F.
\end{align*}
Now the proof of (\ref{eq:resultant}) is completed. \qed

\begin{remark}
 From (\ref{eq:factorization}),
 \begin{multline*}
  1=\prod_{p\in F^{-1}(1,0)}|F(p)\wedge (0,1)|
=|\Res F|^{(1/d)\cdot d^2}\prod_{p\in F^{-1}(1,0),q\in F^{-1}(0,1)}|p\wedge q|^{1/d},
 \end{multline*}
 and we also obtain an important formula
 \begin{gather*}
  |\Res F|=\prod_{p\in F^{-1}(1,0),q\in F^{-1}(0,1)}|p\wedge q|^{-1/d^2}.
 \end{gather*}
\end{remark}

\def\cprime{$'$}


\begin{thebibliography}{10}

\bibitem{Baker09}
{\sc Baker, ~M.}   A finiteness theorem for canonical heights attached to rational maps over function fields, {\em J. Reine Angew. Math.} {\bf 626} (2009), 205--233 

\bibitem{BM01}
{\sc Berteloot,~F.{\rm\ and }Mayer,~V.} {\em Rudiments de dynamique
  holomorphe}, Vol.~7 of {\em Cours Sp\'ecialis\'es [Specialized Courses]},
  Soci\'et\'e Math\'ematique de France, Paris (2001).

\bibitem{Brolin}
{\sc Brolin,~H.} Invariant sets under iteration of rational functions, {\em
  Ark. Mat.}, {\bf 6} (1965), 103--144.

\bibitem{DeMarco03}
{\sc DeMarco,~L.} Dynamics of rational maps: {L}yapunov exponents,
  bifurcations, and capacity, {\em Math. Ann.}, {\bf 326}, 1 (2003), 43--73.

\bibitem{FLM83}
{\sc Freire,~A., Lopes,~A.{\rm\ and }Ma{\~n}{\'e},~R.} An invariant measure for
  rational maps, {\em Bol. Soc. Brasil. Mat.}, {\bf 14}, 1 (1983), 45--62.

\bibitem{HP94}
{\sc Hubbard,~J.~H.{\rm\ and }Papadopol,~P.} Superattractive fixed points in
  {${\bf C}\sp n$}, {\em Indiana Univ. Math. J.}, {\bf 43}, 1 (1994), 321--365.

\bibitem{Lalley92}
{\sc Lalley,~S.~P.} Brownian motion and the equilibrium measure on the {J}ulia
  set of a rational mapping, {\em Ann. Probab.}, {\bf 20}, 4 (1992),
  1932--1967.

\bibitem{Lyubich83}
{\sc Ljubich,~M.~Ju.} Entropy properties of rational endomorphisms of the
  {R}iemann sphere, {\em Ergodic Theory Dynam. Systems}, {\bf 3}, 3 (1983),
  351--385.

\bibitem{Lopes86}
{\sc Lopes,~A.~O.} Equilibrium measures for rational maps, {\em Ergodic Theory
  Dynam. Systems}, {\bf 6}, 3 (1986), 393--399.

\bibitem{ManeDaRocha92}
{\sc {Ma\~n\'e},~R.{\rm\ and }{da Rocha},~L.~F.} Julia sets are uniformly
  perfect, {\em Proc. Amer. Math. Soc.}, {\bf 116}, 3 (1992), 251--257.

\bibitem{Minda77}
{\sc Minda,~C.~D.} Regular analytic arcs and curves, {\em Colloq. Math.}, {\bf
  38}, 1 (1977), 73--82.

\bibitem{ObaPitcher}
{\sc Oba,~M.~K.{\rm\ and }Pitcher,~T.~S.} A new characterization of the {$F$}
  set of a rational function, {\em Trans. Amer. Math. Soc.}, {\bf 166} (1972),
  297--308.

\bibitem{PopoviciVolberg96}
{\sc Popovici,~I.{\rm\ and }Volberg,~A.} Rigidity of harmonic measure, {\em
  Fund. Math.}, {\bf 150}, 3 (1996), 237--244.

\bibitem{Rans95}
{\sc Ransford,~T.} {\em Potential theory in the complex plane}, Cambridge
  University Press, Cambridge (1995).

\bibitem{Ueda94}
{\sc Ueda,~T.} Fatou sets in complex dynamics on projective spaces, {\em J.
  Math. Soc. Japan}, {\bf 46}, 3 (1994), 545--555.

\bibitem{Zdunik97}
{\sc Zdunik,~A.} Harmonic measure on the {J}ulia set for polynomial-like maps,
  {\em Invent. Math.}, {\bf 128}, 2 (1997), 303--327.

\end{thebibliography}
\end{document}